\newtheorem{thm}{Theorem}[section]
\newtheorem{lem}[thm]{Lemma}
\newtheorem{prop}[thm]{Proposition}
\newtheorem{cor}[thm]{Corollary}
\newtheorem{defi}[thm]{Definition}
\newcommand{\thmref}[1]{Theorem~\ref{#1}}
\newcommand{\lemref}[1]{Lemma~\ref{#1}}
\newcommand{\rmkref}[1]{Remark~\ref{#1}}
\newcommand{\propref}[1]{Proposition~\ref{#1}}
\newtheorem{rmk}[thm]{Remark}
\newcommand{\Z}{{\mathbb Z}}
\newcommand{\N}{{\mathbb N}}
\newcommand{\C}{{\mathbb C}}
\begin{document}

\title
{Some Remarks On Rankin-Cohen Brackets of Eigenforms}
\author{Jaban Meher}

\address{Harish-Chandra Research Institute, 
        Chhatnag Road, Jhunsi,
        Allahabad  211 019,
        India.}
\email{jaban@hri.res.in}

\subjclass[2010]{Primary 11F11, 11F25; Secondary 11F37}
\date{\today}
\keywords{Eigenforms, quasimodular forms, Maass-Shimura operator,
Rankin-Cohen brackets}
\maketitle
\begin{abstract}
We investigate the cases for which products of two 
quasimodular or nearly holomorphic eigenforms are
eigenforms. We also generalize the results of Ghate
\cite{ghate1} to the case of Rankin-Cohen brackets.
\end{abstract}
\section{Introduction}
The space of modular forms of fixed weight on the full 
modular group has a basis of simultaneous eigenvectors 
for all Hecke operators. A modular form is called an eigenform if 
it is a simultaneous eigenvector for all Hecke operators. 
A natural question to ask is whether  the product of two 
eigenforms (which may be of different weights) is an eigenform. The 
question was taken up by Duke \cite{duke}  and Ghate \cite{ghate}.
They proved that there are only finitely many cases where this
phenomenon happens. Then a more general question i.e., the 
Rankin-Cohen bracket of two eigenforms was studied by Lanphier and 
Takloo-Bighash \cite{lanphier-ramin}. They also proved that except for 
finitely many cases, the Rankin-Cohen brackets of two eigenforms is not
an eigenform. Recently, Beyerl, James, Trentacoste, Xue \cite{bjtx}
have proved that this phenomenon extends to a certain class of
nearly holomorphic modular forms. More explicitly, they have proved 
that there is only one more case apart from the cases listed in
\cite{duke} and \cite{ghate} for which the product of two nearly
holomorphic eigenforms of certain type is a nearly holomorphic eigenform.

   In this paper, we consider a few more cases of such results.
First, we consider the product of two quasimodular eigenforms. Secondly, we
consider the product of nearly holomorphic eigenforms. Finally, we 
generalize the result of Ghate \cite{ghate1} to the case of Rankin-Cohen
brackets.
\section{Quasimodular forms}
Let $\Gamma= SL_2({\Z})$ be the full modular group and 
${\mathcal H}$  denote the upper half plane. Let $M_k$ be the
space of modular forms of weight $k$ on $\Gamma$.
\begin{defi}
A nearly holomorphic modular form $F$ of weight $k$ and depth
$\le p$ on $\Gamma$ is a polynomial in $1/y$ of degree $\le p$
whose coefficients are holomorphic functions on ${\mathcal H}$ 
with moderate growth, such that 
${(cz+d)}^{-k}F\left(\frac{az+b}{cz+d}\right) = F\vert_k \gamma = F$, 
where 
$\gamma= \left(\!\!\begin{array}{ll}a&b\\ c&d\\ \end{array}\!\!\right)
\in SL_2({\Z})$.  
\end{defi}
Let $\widehat{M}_k^{(\le p)}$ denote the space of such forms.
 We denote by $\widehat{M}_k=\cup_p\widehat{M}_k^{(\le p)}$ the space of 
nearly holomorphic modular form of weight $k$  and 
$\widehat{M}_*=\oplus_k\widehat{M}_k$  the graded ring of all 
nearly holomorphic modular forms on $\Gamma$.
\begin{defi}
A quasimodular form of weight $k$ and depth $\le p$ on $\Gamma$ is the 
constant term of  a nearly holomorphic modular form of weight $k$ and
depth $\le p$ on $\Gamma$. 
\end{defi}
Let $\widetilde{M}_k^{(\le p)}$ denote the space of such forms. Let
$\widetilde{M}_k=\cup_p\widetilde{M}_k^{(\le p)}$ be the space of 
quasimodular forms of weight $k$ and 
$\widetilde{M}_*=\oplus_k\widetilde{M}_k$ the graded ring of all
quasimodular forms on $\Gamma$. Then it is known that $\widetilde{M}_*
={\C}[E_2,E_4,E_6]$. 
Here $E_k(z)= 1-\frac{2k}{B_k}\displaystyle\sum_{m\ge1}\sigma_{k-1}(m)q^m$
is the Eisenstein series of weight $k$,
where $B_k$ is the $k$-th Bernoulli number, $\sigma_{k-1}(m)$ is the 
sum of $(k-1)$-th powers of the positive divisors of $m$, and
$q=e^{2\pi iz}$ with $z\in {\mathcal H}$. For more details on 
quasimodular forms see \cite{123}.\\

 For $f\in \widetilde{M}_k$, define the action of $n^{th}$ Hecke operator
$T_n$ on $f$ by
\begin{equation}\label{operator}
(T_nf)(z)=n^{k-1}\displaystyle\sum_{d\vert n}d^{-k}
\displaystyle\sum_{b=0}^{d-1}f\left(\frac{nz+bd}{d^2}\right).
\end{equation}

Then $T_n$ maps $\widetilde{M}_k$ to $\widetilde{M}_k$.
A quasimodular form is said to be an eigenform if it is an 
eigenvector for all of the Hecke operators $T_n$ for $n\in {\N}$.\\
It is known that the differential operator 
$D=\frac{1}{2\pi i}\frac{d}{dz}$ takes $\widetilde{M}_k$ to 
$\widetilde{M}_{k+2}$.
We have the following proposition which follows 
by a similar argument as done in Proposition $2.4$ and
$2.5$ of \cite{bjtx}.
\begin{prop}\label{commute1}
If $f\in \widetilde{M}_k$, then $(D^m(T_nf))(z)=\frac{1}{n^m}(T_n(D^mf))(z)$, 
for $m\ge 0$. Moreover, we have $D^mf$ is an eigenform for $T_n$
iff $f$ is. In this case, if $\lambda_n$ is the eigenvalue of
$T_n$ associated to $f$, then $n^m \lambda_n$ is the eigenvalue 
of $T_n$ associated to $D^mf$.
\end{prop}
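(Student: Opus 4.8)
The plan is to establish the commutation identity directly for $m=1$, extend it to all $m$ by induction, and then read off the eigenform assertions, with the injectivity of $D^m$ being the one point that genuinely needs justification. First I would differentiate the defining sum \eqref{operator} for $T_nf$ term by term. Since $f\in\widetilde{M}_k$ yields $Df\in\widetilde{M}_{k+2}$, the chain rule applied to each summand $f\bigl((nz+bd)/d^2\bigr)$ contributes a factor $n/d^2$, so that
\[
D(T_nf)(z)=n^{k}\sum_{d\mid n}d^{-(k+2)}\sum_{b=0}^{d-1}(Df)\!\left(\frac{nz+bd}{d^2}\right).
\]
On the other hand, the weight-$(k+2)$ Hecke operator applied to $Df$ reads
\[
(T_n(Df))(z)=n^{k+1}\sum_{d\mid n}d^{-(k+2)}\sum_{b=0}^{d-1}(Df)\!\left(\frac{nz+bd}{d^2}\right),
\]
and the two right-hand sides differ exactly by the factor $n$, giving $D(T_nf)=\tfrac1n\,T_n(Df)$.

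For the general case I would induct on $m$. Assuming $D^m(T_nf)=\tfrac{1}{n^m}T_n(D^mf)$, I apply $D$ to both sides, move the scalar $1/n^m$ through $D$, and invoke the $m=1$ identity with $D^mf\in\widetilde{M}_{k+2m}$ playing the role of $f$; the resulting extra factor $1/n$ promotes the exponent from $m$ to $m+1$, which closes the induction for all $m\ge 0$.

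Finally I would deduce the eigenform statements from this identity. If $T_nf=\lambda_nf$, then $T_n(D^mf)=n^mD^m(T_nf)=n^m\lambda_n\,D^mf$, so $D^mf$ is an eigenform with eigenvalue $n^m\lambda_n$. Conversely, if $T_n(D^mf)=\mu_nD^mf$, the identity gives $D^m\bigl(T_nf-\tfrac{\mu_n}{n^m}f\bigr)=0$, and to conclude $T_nf=\tfrac{\mu_n}{n^m}f$ I would use that $D$ acts on a $q$-expansion as $q\,\tfrac{d}{dq}$, multiplying the coefficient of $q^\ell$ by $\ell$; hence $D^m$ annihilates only constants, and since a quasimodular form of positive weight that is constant must vanish, $D^m$ is injective on $\widetilde{M}_k$ for $k>0$, which forces $f$ to be an eigenform with eigenvalue $\mu_n/n^m$.

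The computation is essentially routine, and the two points requiring care are bookkeeping the weight shift so that each application of $D$ contributes precisely one factor $1/n$, together with the injectivity of $D^m$, which is what drives the converse direction of the equivalence.
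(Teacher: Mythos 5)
Your proof is correct and takes essentially the same approach as the paper, which proves this proposition by citing the direct-computation argument of Propositions 2.4 and 2.5 of \cite{bjtx}: the chain-rule commutation identity for the weight-shifted Hecke operator, induction on $m$, and injectivity of $D^m$ (kernel consisting only of constants, which vanish in positive weight) for the converse direction. Your write-up simply makes explicit the details the paper leaves to that reference.
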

 By comparing the constant coefficients of both sides of the 
equality given in Proposition $2.3$ of \cite{bjtx}, we get  
similar identies for the operator $D$. We now state 
two results which follow the same way as was done in
\cite{bjtx}.
\begin{prop}\label{1}
Suppose that $\{f_i\}_i$ is a collection of modular forms of distinct 
weights $k_i$. Then for $a_i\in {\C}^*$, 
$\displaystyle\sum_{i=1}^{t} a_iD^{(n-\frac{k_i}{2})}(f_i)$
is an eigenform if and only if each $D^{(n-\frac{k_i}{2})}(f_i)$ is
an eigenform where the eigenvalues are the same for any $i$.
\end{prop}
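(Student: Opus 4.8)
The plan is to treat the two implications separately, the non-trivial one resting on the graded-ring structure of $\widetilde M_*$ together with \propref{commute1}.

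First I would normalise the weights. Since $D$ raises weight by $2$, the summand $D^{(n-k_i/2)}(f_i)$ has weight $k_i+2(n-k_i/2)=2n$, \emph{independently of $i$}. Hence, writing $m_i:=n-\tfrac{k_i}{2}$, the whole sum $g:=\sum_{i=1}^t a_iD^{m_i}(f_i)$ is a single quasimodular form in $\widetilde M_{2n}$, and the $i$-th summand lies in the subspace $D^{m_i}(M_{k_i})$. Because the weights $k_i$ are pairwise distinct, the exponents $m_i$ are pairwise distinct as well; I would also discard at the outset any index with $k_i=0$, for which $D^{m_i}(f_i)=0$.

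The reverse implication is then immediate: if every $D^{m_i}(f_i)$ is an eigenform and they all share the eigenvalue $\mu_\ell$ under each $T_\ell$, then $T_\ell g=\sum_i a_i\mu_\ell D^{m_i}(f_i)=\mu_\ell g$, so $g$ is an eigenform. For the forward implication I would invoke the direct-sum decomposition of quasimodular forms (see \cite{123})
\[
\widetilde M_{2n}=\bigoplus_{r\ge 0}D^r\!\left(M_{2n-2r}\right)\ \oplus\ \C\,D^{n-1}E_2 ,
\]
under which each $D^{m_i}(M_{k_i})$ is one of the distinct modular summands $D^r(M_{2n-2r})$ (namely $r=m_i$), while $g$ has no component along the $E_2$-term. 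The key structural input is that every Hecke operator respects this decomposition: rewriting \propref{commute1} as the operator identity $T_\ell D^r=\ell^r D^r T_\ell$ and using that $T_\ell$ maps $M_{2n-2r}$ into itself, one gets $T_\ell\big(D^r(M_{2n-2r})\big)\subseteq D^r(M_{2n-2r})$. Decomposing $g=\sum_i g_{m_i}$ with $g_{m_i}=a_iD^{m_i}(f_i)$ and applying $T_\ell g=\mu_\ell g$, the uniqueness of the decomposition forces $T_\ell g_{m_i}=\mu_\ell g_{m_i}$ for every $i$ and every $\ell$. Since $a_i\in\C^*$, this yields $T_\ell D^{m_i}(f_i)=\mu_\ell D^{m_i}(f_i)$, i.e. each $D^{m_i}(f_i)$ is an eigenform and the eigenvalues $\mu_\ell$ are independent of $i$, as required.

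I expect the only genuine obstacle to be the verification that the Hecke operators preserve each summand of the above decomposition; once the identity $T_\ell D^r=\ell^rD^rT_\ell$ from \propref{commute1} is combined with the Hecke-stability of $M_{2n-2r}$, the projection step is routine linear algebra. The remaining ingredients—the weight count and the direct-sum structure of $\widetilde M_{2n}$—are standard, so the argument should go through cleanly once these are in place.
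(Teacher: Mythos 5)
Your proposal is correct and takes essentially the same approach as the paper, which proves this proposition by deferring to the argument of \cite{bjtx}: write the weight-$2n$ quasimodular forms as a direct sum of summands $D^r(M_{2n-2r})$, note that each summand is Hecke-stable because of the commutation relation $T_\ell D^r=\ell^r D^r T_\ell$ from \propref{commute1}, and compare components. Invoking the full decomposition including the line $\C\,D^{n-1}E_2$ is harmless but not needed, since only the directness of the sum $\bigoplus_r D^r(M_{2n-2r})$ enters the projection step.
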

\begin{prop}\label{2}
If $k>l$ and $f\in M_k$, $g\in M_l$ are eigenforms, then for
$r\ge 0$, $D^{(\frac{k-l}{2}+r)}(g)$ and $D^{r}(f)$ do not have 
the same eigenvalues.
\end{prop}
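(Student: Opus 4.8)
The plan is to assume, for contradiction, that $D^{(\frac{k-l}{2}+r)}(g)$ and $D^{r}(f)$ share all their Hecke eigenvalues, and then to translate this into a rigid relation between the eigenvalue systems of $f$ and $g$ that can be excluded. First I would record the bookkeeping: since $D$ raises the weight by $2$, both $D^{r}(f)$ and $D^{(\frac{k-l}{2}+r)}(g)$ lie in weight $k+2r$, so the comparison is meaningful. Writing $\lambda_n,\mu_n$ for the $T_n$-eigenvalues of $f,g$, \propref{commute1} gives that $D^{r}(f)$ has $T_n$-eigenvalue $n^{r}\lambda_n$ and $D^{(\frac{k-l}{2}+r)}(g)$ has $T_n$-eigenvalue $n^{\frac{k-l}{2}+r}\mu_n$. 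Thus the two forms agree on all eigenvalues if and only if
\[
\lambda_n = n^{\frac{k-l}{2}}\,\mu_n \qquad (n\ge 1).
\]
Setting $\delta=\tfrac{k-l}{2}\in\Z_{>0}$, I would note that both $(\lambda_n)$ and $(n^{\delta}\mu_n)$ satisfy the weight-$k$ Hecke recursion at prime powers (a short check using $\delta=\tfrac{k-l}{2}$ shows the ``error'' exponents match), so the displayed identity is equivalent to its restriction to primes, $\lambda_p=p^{\delta}\mu_p$. Scaling does not affect eigenvalues, so I may take $f,g$ to be normalized eigenforms.

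Next I would dispose of the cases in which at least one of $f,g$ is an Eisenstein series, using the explicit value $\sigma_{k-1}(p)=1+p^{k-1}$ together with growth bounds. If both are Eisenstein, the relation $1+p^{k-1}=p^{\delta}(1+p^{l-1})$ must hold for all $p$; reading off exponents forces the equality of multisets $\{0,k-1\}=\{\delta,\delta+l-1\}$, hence $k=l$, a contradiction. If exactly one is cuspidal, the relation forces the cuspidal eigenvalue to grow like $p^{(k+l)/2-1}$, which violates Deligne's bound $|a_p|\le 2p^{(\mathrm{wt}-1)/2}$, because $\tfrac{k+l}{2}-1$ strictly exceeds both $\tfrac{k-1}{2}$ and $\tfrac{l-1}{2}$. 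Each such subcase is therefore impossible.

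The remaining, and hardest, case is that both $f$ and $g$ are cuspidal, where growth estimates are powerless: the relation $\lambda_p=p^{\delta}\mu_p$ merely asserts that the \emph{normalized} eigenvalues coincide, which is consistent with the Ramanujan bound at each prime. Here I would pass to $L$-functions: the relation $\lambda_n=n^{\delta}\mu_n$ is precisely the identity $L(f,s)=L(g,s-\delta)$ of Dirichlet series, hence of meromorphic functions. Feeding this into the functional equation of $L(f,\cdot)$ and comparing with that of $L(g,\cdot)$ (using that the functional equation determines the gamma factor, since $L(g,\cdot)$ is not identically zero), I would be reduced to requiring that
\[
\frac{\Gamma(\tfrac{k+l}{2}-w)\,\Gamma(w)}{\Gamma(\tfrac{k-l}{2}+w)\,\Gamma(l-w)}
\]
be constant in $w$. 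This fails: at $w=0$ the factor $\Gamma(w)$ contributes a simple pole while the denominator stays finite (as $\tfrac{k-l}{2}\ge 1$ and $l\ge 1$), so the ratio is nonconstant. This contradiction eliminates the cuspidal case and completes the argument.

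I expect the cuspidal--cuspidal case to be the genuine obstacle, since no elementary size comparison separates two level-one eigenforms whose normalized eigenvalues agree at every prime; it is the analytic input of the functional equations (equivalently, strong multiplicity one for $\mathrm{GL}_2$ over $\Q$) that supplies the contradiction, the mismatch of gamma factors being the concrete manifestation of the weight discrepancy $k>l$.
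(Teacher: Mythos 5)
Your argument is correct: \propref{commute1} reduces the statement to showing that the relation $\lambda_n=n^{\delta}\mu_n$ (with $\delta=\tfrac{k-l}{2}\ge 1$) is impossible for eigenforms $f\in M_k$, $g\in M_l$, and your case analysis disposes of it — exponent comparison when both are Eisenstein, Deligne's bound in the mixed cases, and, in the cuspidal--cuspidal case, the identity $L(f,s)=L(g,s-\delta)$ forcing two incompatible functional equations, the incompatibility being visible in the pole of $\Gamma(w)$ at $w=0$ against the finite nonzero values $\Gamma(\tfrac{k-l}{2})$, $\Gamma(l)$.

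The route, however, is genuinely different from (and much heavier than) the one the paper points to. The paper gives no proof at all, saying only that the proposition follows ``the same way as was done in'' \cite{bjtx}; more importantly, the toolkit the paper itself assembles yields a uniform elementary proof with no case analysis, no Deligne bound, and no $L$-functions. For any nonzero eigenform $h\in\widetilde{M}_m$, $m>0$, with $T_nh=c_nh$, one has $a_n(h)=c_n\,a_1(h)$ for all $n\ge1$ (the coefficient of $q$ in $T_nh$ is $a_n(h)$, a purely formal computation valid for quasimodular forms), hence $a_1(h)\neq0$ and one may normalize. The relation $\lambda_n=n^{\delta}\mu_n$ then says precisely that $f$ and $D^{\delta}g$ have the same coefficient of $q^n$ for every $n\ge1$, so $f-D^{\delta}g$ is a constant; since a nonzero constant lies in $\widetilde{M}_0$ while $f-D^{\delta}g\in\widetilde{M}_k$ with $k>0$, the constant vanishes and $f=D^{\delta}g$. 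This contradicts the directness of the decomposition $\widetilde{M}_k^{(\le \delta)}=\oplus_{r=0}^{\delta}D^{r}(M_{k-2r})$ of Proposition $20$ of \cite{123} (applicable because $\delta=\tfrac{k-l}{2}<k/2$), as the nonzero form $f$ would lie in both the $r=0$ and the $r=\delta$ summands. What your approach buys: it is independent of the structure theory of quasimodular forms and identifies the obstruction as archimedean (mismatched gamma factors, equivalently strong multiplicity one), which is indeed the only possible obstruction once one notes that the twisted system $n^{\delta}\mu_n$ satisfies all weight-$k$ Hecke relations and all size bounds. What the paper's framework buys: brevity, and no appeal to Deligne or analytic continuation. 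Two small points in your write-up: Deligne is overkill in the mixed cases, since Hecke's elementary bound $a_p=O(p^{k/2})$ already beats $p^{(k+l)/2-1}$ (as $l\ge4$); and in the final step you should say explicitly that $\delta\ge1$ and $l\ge12$ guarantee $\Gamma(\tfrac{k-l}{2}+w)$ and $\Gamma(l-w)$ are finite and nonzero at $w=0$, which is what makes the pole of $\Gamma(w)$ decisive.
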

\noindent $Notation:$ For $k\in\{12,16,18,20,22,26\}$, let $\Delta_k$ 
denote the unique normalized cusp form of weight $k$ on $\Gamma$.\\
Using the above propositions and following the method as in 
\cite{bjtx}, we have a result analogous to Theorem $3.1$ of 
\cite{bjtx}.
\begin{thm}\label{consequence}
Let $f\in M_k$ and $g\in M_l$ so that for some $r,s\ge 0$,
$D^rf\in \widetilde{M}_{k+2r}$ and $D^sg\in \widetilde{M}_{l+2s}$ are eigenforms. 
Then $(D^rf)(D^sg)$ is an eigenform only in the following cases.
\begin{enumerate}
\item
The modular cases given in \cite{duke} and \cite{ghate}, namely
\begin{center}
$E_4^2=E_8,~~ E_4E_6=E_{10},~~ E_6E_8=E_4E_{10}=E_{14}$,\\
\vspace{.1cm}
$E_4\Delta_{12}=\Delta_{16},~~ E_6\Delta_{12}=\Delta_{18},~~
E_4\Delta_{16}=E_8\Delta_{12}=\Delta_{20},$\\
\vspace{.1cm}
$E_4\Delta_{18}=E_6\Delta_{16}=E_{10}\Delta_{12}=\Delta_{22},$\\
\vspace{.1cm}
$E_4\Delta_{22}=E_6\Delta_{20}=E_8\Delta_{18}=E_{10}\Delta_{16}
=E_{14}\Delta_{12}=\Delta_{26}$.
\end{center}
\item
$(DE_4)E_4=\frac{1}{2}DE_8$.
\end{enumerate}
\end{thm}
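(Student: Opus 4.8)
The plan is to expand the product $(D^rf)(D^sg)$ along derivatives of Rankin--Cohen brackets and to feed the resulting expression into \propref{1} and \propref{2}, thereby reducing the whole problem to the classical product question solved by Duke \cite{duke} and Ghate \cite{ghate}. First, by \propref{commute1} the hypothesis that $D^rf$ and $D^sg$ are eigenforms is equivalent to $f$ and $g$ being Hecke eigenforms in $M_k$ and $M_l$, so I may assume throughout that $f,g$ are normalized eigenforms. Set $K=k+l+2r+2s$, the weight of the product, and $n=K/2$.

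The key step is the expansion
\begin{equation*}
(D^rf)(D^sg)=\sum_{j=0}^{r+s}\gamma_j\,D^{\,r+s-j}\big([f,g]_j\big),
\end{equation*}
where $[f,g]_j\in M_{k+l+2j}$ denotes the $j$-th Rankin--Cohen bracket; this is precisely the constant-term version of the identity in Proposition~2.3 of \cite{bjtx} alluded to above, and from that expansion one checks that the coefficients $\gamma_j$ are nonzero (each is a ratio of Pochhammer symbols). Since $D^{\,r+s-j}\big([f,g]_j\big)=D^{(n-(k+l+2j)/2)}\big([f,g]_j\big)$ and the brackets $[f,g]_j$ are modular forms of the pairwise distinct weights $k+l+2j$, the right-hand side is exactly of the shape governed by \propref{1}. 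Hence the product is an eigenform if and only if every nonzero summand is an eigenform with one common eigenvalue; by \propref{commute1} this forces each surviving bracket $[f,g]_j$ to be a scalar multiple of a single Hecke eigenform. Now \propref{2} applies: two eigenforms of distinct weights $k+l+2j$ and $k+l+2j'$, once raised to weight $K$ by the relevant powers of $D$, can never have equal eigenvalues, so at most one index $j$ can contribute.

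Because $\gamma_0\neq0$ and $[f,g]_0=fg\neq0$, the index $j=0$ always contributes; hence it is the only surviving term, and two conditions must hold: (A) $fg$ is a scalar multiple of a single eigenform, and (B) $[f,g]_j=0$ for every $1\le j\le r+s$ (here I use that $D$ is injective on forms of positive weight, so $\gamma_j\,[f,g]_j=0$ with $\gamma_j\neq0$ forces $[f,g]_j=0$). Condition (A) is exactly the question answered by Duke and Ghate, whose complete list is case (1); if $r+s=0$ no further constraint appears and we land precisely in case (1). If $r+s\ge1$, then (B) with $j=1$ gives $[f,g]_1=0$, equivalently $f^l=c\,g^k$ for a constant $c$; comparing divisors on $X(1)$, the only Duke--Ghate pair also satisfying this is $f=g=E_4$, for which $E_4^2=E_8$. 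Finally $[E_4,E_4]_2\neq0$ (the odd self-brackets vanish but the even ones do not), so (B) forces $r+s=1$, giving the single additional possibility $(DE_4)E_4=\tfrac12 DE_8$, which is case (2).

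The main obstacle is the second step: securing the bracket expansion together with the non-vanishing of its coefficients. The non-vanishing of $\gamma_0$ is what guarantees that the lowest-weight piece $fg$ can never be cancelled, so that the problem always contains the classical product $fg$ as its $j=0$ part; and the non-vanishing of the remaining $\gamma_j$ is what lets condition (B) be read off as the vanishing of the brackets themselves rather than of some linear combination. Once this expansion is in place, \propref{1} and \propref{2} do the structural work, and the only genuinely arithmetic inputs that remain are the Duke--Ghate classification of when $fg$ is an eigenform and the elementary determination, via divisors, of when a first Rankin--Cohen bracket vanishes.
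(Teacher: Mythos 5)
Your proposal is correct and takes essentially the same approach as the paper: the paper proves this theorem precisely by taking the constant-term ($D$-operator) version of the Rankin--Cohen expansion in Proposition 2.3 of \cite{bjtx} and feeding it into \propref{commute1}, \propref{1} and \propref{2}, following the method of Theorem 3.1 of \cite{bjtx}. Your write-up simply makes explicit the steps (vanishing of the higher brackets, reduction of the $j=0$ term to the Duke--Ghate list, and the $[E_4,E_4]_1=0$, $[E_4,E_4]_2\ne 0$ analysis) that the paper leaves to that cited method.
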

\begin{thm}
Let $f\in \widetilde{M}_k^{\le p}$ and $g\in \widetilde{M}_l^{\le q}$
be eigenforms such that $p,q<k/2$. Then $fg$ is  
an eigenform only in the following cases.
\begin{enumerate}
\item
The modular cases given in \cite{duke} and \cite{ghate}, namely
\begin{center}
$E_4^2=E_8,~~ E_4E_6=E_{10},~~ E_6E_8=E_4E_{10}=E_{14}$,\\
\vspace{.1cm}
$E_4\Delta_{12}=\Delta_{16},~~ E_6\Delta_{12}=\Delta_{18},~~
E_4\Delta_{16}=E_8\Delta_{12}=\Delta_{20},$\\
\vspace{.1cm}
$E_4\Delta_{18}=E_6\Delta_{16}=E_{10}\Delta_{12}=\Delta_{22},$\\
\vspace{.1cm}
$E_4\Delta_{22}=E_6\Delta_{20}=E_8\Delta_{18}=E_{10}\Delta_{16}
=E_{14}\Delta_{12}=\Delta_{26}$.
\end{center}
\item
$(DE_4)E_4=\frac{1}{2}DE_8$.
\end{enumerate}
\end{thm}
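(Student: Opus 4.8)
The plan is to reduce this statement to \thmref{consequence} by showing that, under the stated depth hypotheses, a quasimodular Hecke eigenform is forced to be a scalar multiple of $D^r$ applied to a \emph{single} modular eigenform. Once $f$ and $g$ are known to have this shape, the theorem is an immediate consequence of the already-established \thmref{consequence}.

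First I would invoke the structure theory of quasimodular forms (\cite{123}): because the depth is strictly less than half the weight, $f$ admits a unique decomposition $f=\sum_{r=0}^{p}D^{r}(g_r)$ with $g_r\in M_{k-2r}$, and likewise $g=\sum_{s=0}^{q}D^{s}(h_s)$ with $h_s\in M_{l-2s}$. The point of the depth restriction is exactly that every weight $k-2r$ (respectively $l-2s$) occurring is positive, so that no $E_2$-type term can appear and the sum is direct. Since the $g_r$ have pairwise distinct weights $k-2r$ and $D^{r}=D^{(k/2-(k-2r)/2)}$, the expression $f=\sum_{r}D^{(k/2-(k-2r)/2)}(g_r)$ is precisely of the form treated in \propref{1} with $n=k/2$. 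Hence $f$ being an eigenform forces each nonzero $D^{r}(g_r)$ to be an eigenform, all sharing one common system of eigenvalues, and then \propref{commute1} shows that each such $g_r$ is itself a modular Hecke eigenform.

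Next I would rule out two surviving terms. If $g_r$ and $g_{r'}$ are both nonzero with $r<r'$, then $g_r\in M_{k-2r}$ and $g_{r'}\in M_{k-2r'}$ are eigenforms of distinct weights $k-2r>k-2r'$, and this puts us exactly in the situation of \propref{2} with $(k-2r,\,k-2r')$ in the roles of $(k,l)$ and the parameter equal to $r$: that proposition gives that $D^{r'}(g_{r'})$ and $D^{r}(g_r)$ cannot share eigenvalues, contradicting the conclusion of the previous paragraph. Therefore at most one term survives, so $f=a\,D^{r_1}(F)$ for a normalized modular eigenform $F\in M_{k-2r_1}$ and some $a\in\C^{*}$; the same argument applied to $g$ (using $q<l/2$) yields $g=b\,D^{s_1}(G)$ with $G\in M_{l-2s_1}$ a normalized eigenform and $b\in\C^{*}$.

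Finally, $fg=ab\,(D^{r_1}F)(D^{s_1}G)$, so $fg$ is an eigenform if and only if $(D^{r_1}F)(D^{s_1}G)$ is; as $F,G$ are modular eigenforms, $D^{r_1}F$ and $D^{s_1}G$ are eigenforms by \propref{commute1}, and \thmref{consequence} applies verbatim to pin down the listed cases, while a direct check shows these cases do occur (for instance $E_4\in\widetilde{M}_4^{(\le 0)}$ and $DE_4\in\widetilde{M}_6^{(\le 1)}$ give case (2) via $(DE_4)E_4=\tfrac12 D(E_4^2)=\tfrac12 DE_8$). The only genuine work is the reduction in the two middle paragraphs; the step to watch is the use of the depth hypotheses, since it is precisely the requirement that the depth lie below half the weight that excludes the $E_2$-built quasimodular forms which are not pure derivatives of modular forms and for which this clean reduction would break down.
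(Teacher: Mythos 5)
Your proof is correct and takes essentially the same route as the paper's: decompose $f$ and $g$ via Proposition 20 of \cite{123} (valid precisely because the depth is below half the weight), use \propref{1}, \propref{2} and \propref{commute1} to force each of $f$ and $g$ to be a nonzero multiple of $D^r$ applied to a single modular eigenform, and then apply \thmref{consequence}. The paper states this argument in three terse lines; you have simply supplied the details (including the correct reading of the depth hypothesis on $g$ as $q<l/2$) that it leaves implicit.
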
  
\begin{proof}
 We know from Proposition $20$ of \cite{123} (page $59$) that if $p<k/2$, 
then $\widetilde{M}_k^{\le p}=\displaystyle\oplus_{r=0}^{p}D^r(M_{k-2r})$.
Now, if $f\in \widetilde{M}_k^{\le p}$ and $g\in \widetilde{M}_l^{\le q}$
are eigenforms, then by \propref{1} and \ref{2}, we can conclude that
$f=D^r(f_r)$ and $g=D^s(g_s)$, for some $r$, $s$ and $f_r\in M_{k-2r}$, 
$g_s\in M_{k-2s}$. By applying the previous theorem, the result follows.
\end{proof}
\begin{rmk}
{\rm It is known from \cite{123} that if $f$ is a non-zero quasimodular 
form of weight $k$ and depth $p$, then $p\le k/2$.}
\end{rmk}
\begin{rmk}\label{rkm1}
{\rm If $f= \displaystyle\sum_{n\ge 1}a_nq^n \in \widetilde{M}_k$ 
is a non-zero eigenform, then $a_1\ne 0$.
Thus, it follows that the product of two quasimodular eigenforms
(having zero constant term) is not an eigenform.}
\end{rmk}
It is easy to see that $E_2$ is an eigenform.
\begin{rmk}\label{rmk-classify}
{\rm Following the same proof as in the case of $M_k$, one can prove
that a quasimodular form in $\widetilde{M}_k$ with non-zero
constant Fourier coefficient is an eigenform iff $f\in {\C}E_k$.} 
\end{rmk}
We have the following theorem. 
\begin{thm}\label{mainq}
Let $f\in \widetilde{M}_k$ and $g\in \widetilde{M}_l$ be eigenforms
such that the constant coefficients of both $f$ and $g$ are non-zero. Then
$(D^rf)(D^sg)$ is an eigenform only in the following cases.
$$
E_4^2=E_8, ~E_4E_6=E_{10}, ~E_6E_8=E_4E_{10}=E_{14}, 
~(DE_4)E_4=\frac{1}{2}DE_8.
$$
\end{thm}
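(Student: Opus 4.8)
The plan is to reduce the problem to the Eisenstein series and to the already-established \thmref{consequence}, so that the only genuinely new input concerns the forms built from $E_2$. First I would apply \rmkref{rmk-classify}: since $f\in\widetilde M_k$ and $g\in\widetilde M_l$ are eigenforms whose constant coefficients are nonzero, we must have $f=c_1E_k$ and $g=c_2E_l$ with $c_1,c_2\in\C^*$ and $k,l\ge 2$ even. The eigenform property is insensitive to nonzero scalars, so it suffices to decide when $P=(D^rE_k)(D^sE_l)$ is an eigenform, and I would split according to whether both $k,l\ge 4$ or $E_2$ occurs. If $k,l\ge 4$, then $E_k\in M_k$ and $E_l\in M_l$ are genuine modular eigenforms, and \thmref{consequence} lists every case in which $(D^rE_k)(D^sE_l)$ is an eigenform; as $f,g$ have nonzero constant terms they are Eisenstein and not cuspidal, so the rows of \thmref{consequence} involving the $\Delta_m$ are impossible and precisely the displayed Eisenstein identities remain.

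The crux is the case where one factor comes from $E_2$; by symmetry I take $k=2$. If $r\ge 1$ and $s\ge 1$, then both factors have vanishing constant term, so $P$ has vanishing constant coefficient \emph{and} vanishing $q^1$-coefficient, whence \rmkref{rkm1} already rules out $P$ being an eigenform. In the remaining subcases $P$ is quasimodular of weight $W=2+2r+l+2s$, with depth equal to the maximal value $W/2$ exactly when $l=2$. Combining the decomposition $\widetilde M_W^{\le p}=\bigoplus_j D^j(M_{W-2j})$ of \cite{123} with \propref{commute1}, \propref{1} and \propref{2} (and, in maximal depth, the one-dimensional top layer spanned by $D^{W/2-1}E_2$), I would conclude that if $P$ is an eigenform then $P=c\,D^{j_0}(h)$ for a single modular eigenform $h$, or $P=c\,D^{j_0}(E_2)$ when $l=2$, where $j_0$ and the weight of $h$ are forced by the weight and depth of $P$.

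To identify $h$ I would use the leading-coefficient homomorphism on $\widetilde M_*=\C[E_2,E_4,E_6]$ extracting the top-degree part in $E_2$: it is multiplicative, carries each $D^aE_2$ to a nonzero constant and each $D^bE_m$ to a nonzero multiple of $E_m$. Applied to $P$ it produces a nonzero multiple of $E_l$, forcing $h$ to be proportional to $E_l$ and excluding every cusp form at this structural stage, before any size estimate; this is the main obstacle, since a crude bound on Hecke eigenvalues does not suffice uniformly here (one has $-2m/B_m\to 0$). It then remains to compare the $q$-expansions of $P$ and of $c\,D^{j_0}(E_l)$, which through the Ramanujan identities $12\,DE_m=m(E_2E_m-E_{m+2})$ is a short computation: the constant term eliminates $r=s=0$ (indeed $E_2E_l=E_{l+2}+\tfrac{12}{l}DE_l$ and $E_2^2=E_4+12\,DE_2$ are visibly not eigenforms by \propref{1} and \propref{2}), while the $q^1,q^2$ coefficients eliminate the rest, the lone coincidence $(k,l,r,s)=(2,4,4,0)$ surviving $q^2$ and being removed by the $q^3$-coefficient.
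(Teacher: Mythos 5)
Your proposal is correct, and for the decisive cases it takes a genuinely different route from the paper. The reduction skeleton is the same: \rmkref{rmk-classify} gives $f=c_1E_k$, $g=c_2E_l$; the case $k,l\ge 4$ is handed to \thmref{consequence} (the cuspidal rows there being impossible for Eisenstein factors); and the case $k=2$, $r,s\ge 1$ dies by \rmkref{rkm1}. The divergence is in the cases $(D^rE_2)(D^sE_l)$ with exactly one of $r,s$ equal to zero. The paper attacks these through Hecke relations among the Fourier coefficients of the product itself ($a_4=a_2^2-2^{w-1}$, $a_6=a_2a_3$): for $E_2(D^sE_k)$ this leads to \eqref{negative}, \eqref{b6} and a case analysis of congruences mod $4$ and $5$; for $(D^rE_2)E_k$ to the quadratic \eqref{quadratic}, integrality of $2k/B_k$, and a finite case check. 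You instead use the structural rigidity of $\widetilde{M}_W^{\le p}=\oplus_j D^j(M_{W-2j})$ with \propref{commute1}, \propref{1}, \propref{2} to force an eigenform product to equal $c\,D^{j_0}(h)$ for a \emph{single} modular eigenform $h$, then pin down $h\propto E_l$ and $j_0=r+s+1$ via the top-degree-in-$E_2$ map (multiplicative because $\C[E_2,E_4,E_6]$ is a polynomial ring; it sends $D^jh$ to a nonzero multiple of $h$ by the easy induction: the leading coefficient of $Df$ is $\frac{k-p}{12}$ times that of $f$). The endgame is then a short coefficient comparison, and your numerics check out: for $E_2(D^sE_l)$, $l\ge 4$, the $q^2$-coefficient forces $-24=2^s\sigma_{l-1}(2)$, impossible; for $(D^rE_2)E_l$ it forces $-2l/B_l=2^r(2^l-1)$, whose only solution (positivity forces $4\mid l$, and $|B_l|$ grows) is $(l,r)=(4,4)$, which the $q^3$-coefficient kills ($-336096\ne -163296$, while the $q^2$-coefficients both equal $-6912$, exactly your ``lone coincidence''). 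Your method buys a conceptual explanation and avoids the paper's congruence gymnastics; the paper's method buys self-containedness, using nothing about quasimodular structure beyond what it already stated.

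Two caveats, neither fatal. First, when $l=2$ the product $E_2(D^sE_2)$ has maximal depth $W/2$, so the decomposition you need is the full Proposition 20 of \cite{123} with top layer $\C\,D^{W/2-1}E_2$, and \propref{1} must be extended to cover that layer: to exclude mixtures you need that no $D^jh_j$ shares eigenvalues with $D^{W/2-1}E_2$; this is a $2$-adic valuation argument for Eisenstein $h_j$, but for cuspidal $h_j$ it requires Deligne's bound, since $\lambda_2=3\cdot 2^{w/2-1}$ exceeds $2\cdot 2^{(w-1)/2}$ only barely. Second, your parenthetical identity $E_2E_l=E_{l+2}+\frac{12}{l}DE_l$ is false whenever $S_{l+2}\ne 0$ (already for $l=10$ there is a $\Delta_{12}$-component); this is harmless, since the case $r=s=0$ is settled by \propref{main1}, or inside your own framework by comparing constant terms ($c\,D^{j_0}E_l$ has constant term $0$, while $E_2E_l$ has constant term $1$), but as written that step should not be left resting on the displayed identity.
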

To prove the above theorem, we first prove the following proposition.
\begin{prop}\label{main1}
Let $f\in M_k$ be an eigenform. Then $E_2f$ is an
eigenform if and only if  $f\in {\C}\Delta_{12}$. 
\end{prop}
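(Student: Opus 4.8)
The plan is to exploit the depth-one structure of $E_2f$ together with the Hecke-equivariance of $D$ from \propref{commute1} and the eigenvalue separation of \propref{2}. Since $E_2$ has weight $2$ and depth $1$ while $f\in M_k$ has depth $0$, the product $E_2f$ lies in $\widetilde{M}_{k+2}^{\le 1}$, and because $k\ge 4$ gives $1<(k+2)/2$, Proposition $20$ of \cite{123} yields the direct sum decomposition $\widetilde{M}_{k+2}^{\le 1}=M_{k+2}\oplus D(M_k)$. First I would express $E_2f$ in this decomposition. Using the Serre derivative $\vartheta f:=Df-\tfrac{k}{12}E_2f$, which is classically a modular form in $M_{k+2}$, one has $E_2f=\tfrac{12}{k}Df-\tfrac{12}{k}\vartheta f$, so the $D(M_k)$-component is $g_1:=\tfrac{12}{k}Df$ and the $M_{k+2}$-component is $g_0:=-\tfrac{12}{k}\vartheta f$.

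The key observation is that each $T_n$ preserves both summands: it preserves $M_{k+2}$ by the classical theory, and it preserves $D(M_k)$ because \propref{commute1} gives $T_n(Dh)=nD(T_nh)\in D(M_k)$ for $h\in M_k$. Since $f$ is an eigenform with eigenvalues $\lambda_n$, the same proposition shows $Df$ is an eigenform with eigenvalues $n\lambda_n$, so $T_ng_1=n\lambda_ng_1$. Now suppose $E_2f$ is an eigenform with eigenvalues $\mu_n$. Comparing the two components of $T_n(E_2f)=\mu_n(g_0+g_1)$, and noting $g_1=\tfrac{12}{k}Df\ne 0$ because $f$ is a nonzero form of weight $k\ge 4$ and hence non-constant, one obtains $\mu_n=n\lambda_n$ together with $T_ng_0=n\lambda_ng_0$ for all $n$.

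The crux is then to force $g_0=0$. If $g_0\ne 0$, it is a nonzero eigenform in $M_{k+2}$ whose eigenvalues $n\lambda_n$ coincide with those of $Df\in D(M_k)=D^{1}(M_k)$. Applying \propref{2} with the higher weight-$(k+2)$ form $g_0=D^{0}(g_0)$ and the lower weight-$k$ form $f$, so that $\tfrac{(k+2)-k}{2}+0=1$, shows that $D^{1}(f)=Df$ and $D^{0}(g_0)=g_0$ cannot share eigenvalues, a contradiction. Hence $g_0=0$, i.e.\ $Df=\tfrac{k}{12}E_2f$, and I would finish by reading off Fourier coefficients: the constant term gives $\tfrac{k}{12}a_0=0$, so $f$ is a cusp form, while the coefficient of $q^1$ gives $a_1=\tfrac{k}{12}a_1$; since $a_1\ne 0$ by \rmkref{rkm1}, this forces $k=12$, and as $S_{12}=\C\Delta_{12}$ we conclude $f\in\C\Delta_{12}$. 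The converse is immediate, since for $f\in\C\Delta_{12}$ Ramanujan's identity $D\Delta_{12}=E_2\Delta_{12}$ gives $E_2f=Df$, an eigenform by \propref{commute1}. The only delicate point is the bookkeeping in the application of \propref{2}, where the number of derivatives ($D^{1}$ on the weight-$k$ side against $D^{0}$ on the weight-$(k+2)$ side) must be matched exactly to the weight gap; once $g_0=0$ is established, only the short coefficient computation remains.
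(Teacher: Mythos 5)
Your proof is correct, and after the opening step it takes a genuinely different route from the paper's. Both arguments begin the same way: the paper also introduces the Serre derivative $g=Df-\frac{k}{12}E_2f\in M_{k+2}$ and proves $\beta_n=n\lambda_n$ (for the eigenvalues $\beta_n$ of $E_2f$) by noting that $T_ng-n\lambda_n g$ equals $\frac{k}{12}(n\lambda_n-\beta_n)E_2f$, a multiple of the non-modular function $E_2f$ lying in $M_{k+2}$, hence zero; your direct-sum decomposition $\widetilde{M}_{k+2}^{\le 1}=M_{k+2}\oplus D(M_k)$ from Proposition 20 of \cite{123} is a cleaner packaging of that same fact, and both arguments conclude that $g$ is an eigenform with eigenvalues $n\lambda_n$. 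The divergence is in how $f$ is then pinned down. You kill $g$ outright: a nonzero $g\in M_{k+2}$ would share the eigenvalues $n\lambda_n$ of $Df$, contradicting \propref{2} with $r=0$ and weight gap $2$; then $Df=\frac{k}{12}E_2f$ forces $a_0=0$ and $a_1=\frac{k}{12}a_1$, and since $a_1\ne 0$ by \rmkref{rkm1} this gives $k=12$, so $f\in S_{12}={\C}\Delta_{12}$. The paper instead allows $g\ne 0$ and argues by cases: for $f=E_k$ it derives the false identity $n\sigma_{k-1}(n)=\sigma_{k+1}(n)$, while for $f$ cuspidal it uses the relation $b_n=na_n\left(1-\frac{k}{12}\right)$ to compute $a_2=-24$, $a_3=252$, $a_4=-1472$, recognizes these as coefficients of $\Delta_{12}$, and invokes Ghitza's theorem \cite{ghitza} to force $k=12$. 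Your route is shorter, needs no Eisenstein/cuspidal case split (vanishing of the Serre derivative yields cuspidality automatically), and avoids the external appeal to \cite{ghitza}; its cost is reliance on \propref{2}, which the paper quotes from \cite{bjtx} without proof and which carries the real arithmetic content (eigenvalue separation across weights) that the paper's explicit computation plus \cite{ghitza} supplies by hand. Neither \propref{2} nor Ghitza's theorem depends on \propref{main1}, so your argument is non-circular, and your bookkeeping in applying \propref{2} (one derivative on the weight-$k$ side against none on the weight-$(k+2)$ side) is exactly right.
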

\begin{proof}
Since $D\Delta_{12}=E_2\Delta_{12}$, by \propref{commute1}, $E_2\Delta_{12}$ 
is an eigenform.\\
 Conversely, suppose that  $E_2f$ is an eigenform with eigenvalues $\beta_n$,
where $f=\displaystyle\sum_{m\ge 0}a_mq^m\in M_k$ is an 
eigenform with eigenvalues $\lambda_n$. We know that 
$g=Df-\frac{k}{12}E_2f\in M_{k+2}$.
Then $T_n(Df)-\frac{k}{12}T_n(E_2f)=n\lambda_nDf-\frac{k}{12}n\lambda_nE_2f
+\frac{k}{12}(n\lambda_n-\beta_n)E_2f\in M_{k+2}$ . 
Since $E_2f$ is not a modular form and 
$n\lambda_nDf-\frac{k}{12}n\lambda_nE_2f$
is a modular form, we have $n\lambda_n=\beta_n$
for all $n\ge 1$. Thus $g=Df-\frac{k}{12}E_2f\in M_{k+2}$ is an eigenform 
with eigenvalues $n\lambda_n$. \\
If $f=E_k$, then $g=\alpha E_{k+2}$ for some $\alpha \in {\C}$. 
Therefore, by applying $T_n$ to
$\alpha E_{k+2}=DE_k-\frac{k}{12}E_2E_k$, we get for all $n\ge 1$,~  
$n\sigma_{k-1}(n)=\sigma_{k+1}(n)$, which is not true.\\
If $f$ is a cusp form, without loss of
generality assume that $f$ is normalized. Let 
$g=\displaystyle\sum_{m\ge 1}b_mq^m$.
Since $b_1=1-\frac{k}{12}$, we have 
\begin{equation}
b_n=na_n\left(1-\frac{k}{12}\right),
\end{equation}
for all $n\ge 1$.
Now computing the values of $b_n$ from $Df-\frac{k}{12}E_2f$
in terms of $a_n$ and then substituting in the previous equation,
we see that $a_2=-24$, $a_3=252$ and $a_4=-1472$. These are nothing 
but the second, third and fourth Fourier coefficients of $\Delta_{12}$
respectively. But Theorem $1$ of \cite{ghitza} says that if $f_1$ and 
$f_2$ are two cuspidal eigenforms on $\Gamma_0(N)$ of different weights, 
then there exists $n\le 4(\log(N)+1)^2$ such that $a_n(f_1)\ne a_n(f_2)$. 
Applying this theorem to $f_1=f$, $f_2=\Delta_{12}$ and $N=1$, we conclude 
that $k=12$. Thus we have $f=\Delta_{12}$.
\end{proof}
\begin{rmk}\label{rmk2}
 {\rm Since $DE_2=\frac{E_2^2-E_4}{12}$ and $DE_2$, $E_4$ are eigenforms 
with different eigenvalues, $E_2^2$ is not an eigenform.}
\end{rmk}
\begin{proof}[Proof of \thmref{mainq}]
By \thmref{consequence}, \rmkref{rkm1}, \rmkref{rmk-classify}, 
\propref{main1} and \rmkref{rmk2}, we only have to prove 
that in the following cases $(D^rE_2)(D^sE_k)$ is not an eigenform.
\begin{enumerate}
\item
$r=0$ and $s\ge 1$
\item
$r\ge 1$ and $s=0$.
\end{enumerate}
For $(1)$, let us assume on the contrary that $E_2(D^sE_k)$ is an 
eigenform where 
$s\ge 1$. The first few coefficients of the normalized form 
$\frac{-B_k}{2k}E_2(D^sE_k)=\displaystyle\sum_{n\ge 1}a_nq^n$ are 
$a_1=1$, $a_2=2^s\sigma_{k-1}(2)-24$, 
$a_3= 3^s\sigma_{k-1}(3)-24(2^s\sigma_{k-1}(2)+3)$,
$a_4= 4^s\sigma_{k-1}(4)-24(3^s\sigma_{k-1}(3)+3\cdot 2^s\sigma_{k-1}(2)+4)$.\\
Since $\frac{-B_k}{2k}E_2(D^sE_k)$ is an eigenform
we have $a_4=a_2^2-2^{k+2s+1}$ and $a_6=a_2a_3$. Thus we have
$$
4^s\sigma_{k-1}(4)-24(3^s\sigma_{k-1}(3)+3\cdot 2^s\sigma_{k-1}(2)+4)
=2^{2s}\sigma_{k-1}(2)^2-48\cdot2^s\sigma_{k-1}(2)+576-2^{k+2s+1}
$$
and\\
$
6^s\sigma_{k-1}(6)-24(5^s\sigma_{k-1}(5)+
3\cdot4^s\sigma_{k-1}(4)+4\cdot 3^s\sigma_{k-1}(3)+ 
7\cdot2^s\sigma_{k-1}(2)+6)=
(2^s\sigma_{k-1}(2)-24)(3^s\sigma_{k-1}(3)-24(2^s\sigma_{k-1}(2)+3)).
$
\\
From the multiplicativity of $\sigma_{k-1}$ and 
$\sigma_{k-1}(4)=\sigma_{k-1}(2)^{2}-2^{k-1}$, these simplify to

\begin{equation}\label{negative}
3^s(1+3^{k-1})+2^s+28=2^{k+s-4}(2^s-2^3)
\end{equation}
and
\begin{equation}\label{b6}
5^s\sigma_{k-1}(5)+3^{s+1}\sigma_{k-1}(3)
+2^{2s+1}{\sigma_{k-1}(2)}^2+7\cdot 2^{s+2}\sigma_{k-1}(2)
-3\cdot 2^{k+2s-1}+78=0.
\end{equation}
 Now, if $s\le 3$, then the left hand side of \eqref{negative} is 
positive, but the right hand side of the equation is non-positive.
Thus $s\ge 4$.
If $k\equiv 2\pmod{4}$ and $s$ is odd, then 
$7+3^s\left(\frac{1+3^{k-1}}{4}\right)\equiv 2\pmod{4}$, but 
$2^{k+s-6}(2^s-2^3)-2^{s-2}$ is divisible by $4$,
giving a contradiction to \eqref{negative}.
If $k\equiv 2\pmod{4}$ and $s\equiv 0\pmod{4}$, then
$3^s(1+3^{k-1})+2^s(1+2^{k-1})+28 \equiv 0\pmod{5}$,
but $5$ does not divide $2^{k+2s-4}$. This  gives a contradiction.
If $k\equiv 2\pmod{4}$ and $s\equiv 2\pmod{4}$, then 
$3^{s+1}\sigma_{k-1}(3)+2^{2s+1}{\sigma_{k-1}(2)}^2+
7\cdot 2^{s+2}\sigma_{k-1}(2)-3\cdot 2^{k+2s-1}+78 \equiv 4 \pmod{5}$,
but the remaining term of left hand side of \eqref{b6} is divisible 
by $5$, giving a contradiction. If $k\equiv 0\pmod{4}$ and $s$ is even 
or $s\equiv 1\pmod{4}$, then we get a contradiction from 
\eqref{negative} and if $k\equiv 0\pmod{4}$ and $s\equiv 3\pmod{4}$,
we get a contradiction from \eqref{b6}. This proves the theorem for
case $(1)$.\\  
For case $(2)$, let us assume on the contrary that $(D^rE_2)E_k$ 
is an eigenform for $r\ge 1$. 
Let $\frac{-1}{24}(D^rE_2)E_k=\displaystyle\sum_{n\ge 1}b_nq^n$ be the
normalized eigenform. The first few coefficients of the expansion
are 
$b_1=1$, $b_2=3\cdot 2^r-\frac{2k}{B_k}$,
$b_3=4\cdot 3^r-\frac{2k}{B_k}(3\cdot 2^r+ \sigma_{k-1}(2))$,
$b_4=7\cdot 4^r- \frac{2k}{B_k}(4\cdot 3^r+ 3\cdot 2^r\sigma_{k-1}(2)
+ \sigma_{k-1}(3))$.\\
Since $\frac{-1}{24}(D^rE_2)E_k$ is a normalized eigenform, we have
$b_4=b_2^2-2^{k+2r+1}$. Substituting above values of $b_2$ and $b_4$
we get
$$
7\cdot4^r -\frac{2k}{B_k}\left(4\cdot 3^r+3\cdot 2^r\sigma_{k-1}(2)+
\sigma_{k-1}(3)\right)={\left(3\cdot 2^r-\frac{2k}{B_k}\right)}^2-2^{k+2r+1}.
$$
This can be simplified to
$$
{\left(\frac{2k}{B_k}\right)}^2+
\frac{2k}{B_k}\left(4\cdot 3^r+3\cdot 2^r(2^{k-1}-1)+1+3^{k-1}\right)+ 
2^{2r+1}(1-2^k)=0.
$$
\begin{equation}\label{quadratic}
\Rightarrow \frac{2k}{B_k}= \frac{-b \pm \sqrt{b^2+2^{2r+3}(2^k-1)}}{2},
\end{equation}
where 
\begin{equation}\label{b}
b= 4\cdot 3^r+3\cdot 2^r(2^{k-1}-1)+1+3^{k-1}.
\end{equation}
Since $\frac{2k}{B_k}$ is a rational number, $b^2+2^{2r+3}(2^k-1)$ is a 
perfect square, and since $2$ divides $b$, $\frac{2k}{B_k}$ is an 
integer. This implies that $k\in \{2,4,6,8,10,14\}$. Since the case
$k=2$ is shown in case $(1)$, we only consider
$k\in \{4,6,8,10,14\}$. \\

Let $k=4$. In this case, $\frac{2k}{B_k}=-240$. Since 
$\frac{2k}{B_k}$ is negative, from \eqref{quadratic}, we get\\

$-b-\sqrt{b^2+2^{2r+3}(2^4-1)}=-480$\\
$\Rightarrow b^2+15\cdot 2^{2r+3}={(b-480)}^2$\\
$\Rightarrow b= 240-2^{2r-3}$\\ 
Substituting this value of $b$ in \eqref{b}, we get
\begin{equation}\label{case4}
2^{2r-3}+4\cdot 3^r+21\cdot 2^r-212=0.
\end{equation}
Now, we can see that \eqref{case4} is not satisfied for any
positive integer $r$, giving a contradiction. The other cases
are done similarly, whereby one uses \eqref{quadratic} to obtain an
equation in terms of $r$. It is straightforward to show that
this equation cannot be satisfied for any appropriate integer
values of $r$. This concludes the proof of the theorem.
\end{proof}
\begin{cor}
Let $f\in M_k$ be an eigenform. Then $(D^rE_2)f$ is an eigenform 
if and only if $r=0$ and $f\in {\C}\Delta_{12}$.
\end{cor}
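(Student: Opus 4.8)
The plan is to obtain the corollary as an assembly of the results already proved, organized by the two parameters at play: the value of $r$, and whether $f$ is a cusp form or has nonzero constant term. The first step is to record that $D^rE_2$ is itself an eigenform for every $r\ge 0$: since $E_2$ is an eigenform, \propref{commute1} shows $D^rE_2$ is an eigenform (with eigenvalue $n^r$ times that of $E_2$ for $T_n$). Hence $(D^rE_2)f$ is a genuine product of two quasimodular eigenforms, and the earlier machinery applies. I would also note the explicit expansion $D^rE_2=-24\sum_{n\ge 1}n^r\sigma_1(n)q^n$, so that $D^rE_2$ has vanishing constant term precisely when $r\ge 1$.

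For $r=0$ the product is just $E_2f$, and here \propref{main1} settles everything at once: $E_2f$ is an eigenform if and only if $f\in\C\Delta_{12}$. This covers both $f$ cuspidal and $f=cE_k$ (the latter being excluded since $E_k\notin\C\Delta_{12}$), and it produces exactly the asserted family, because $E_2\Delta_{12}=D\Delta_{12}$ is an eigenform by \propref{commute1}.

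For $r\ge 1$ I would split on the constant term of $f$. If $f$ is a cusp form, then both $D^rE_2$ and $f$ are eigenforms with zero constant term, so their product has vanishing coefficient at $q^1$; by \rmkref{rkm1} it cannot be an eigenform. If instead $f$ has nonzero constant term, then \rmkref{rmk-classify} forces $f\in\C E_k$, so up to a scalar the product is $(D^rE_2)E_k$, which is precisely case $(2)$ in the proof of \thmref{mainq} and is shown there not to be an eigenform. Combining the three cases gives the stated equivalence.

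The only place where real work is hidden is the last subcase, namely that $(D^rE_2)E_k$ is never an eigenform for $r\ge 1$; but this is already secured inside \thmref{mainq}, whose argument forces $\tfrac{2k}{B_k}$ to be an integer via the quadratic relation \eqref{quadratic}--\eqref{b}, restricts $k$ to $\{4,6,8,10,14\}$, and then eliminates each value by an elementary Diophantine check. So I expect no new analytic input is needed, and the main (mild) care required is simply to verify that the cusp-form case with $r\ge 1$, which falls outside the hypotheses of \thmref{mainq}, is correctly absorbed by \rmkref{rkm1}, while the case $r=0$ is absorbed by \propref{main1}.
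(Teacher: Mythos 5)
Your proof is correct and takes essentially the same route as the paper, whose proof simply cites \rmkref{rkm1}, \thmref{mainq} and \propref{main1} as a ``direct consequence.'' Your case split ($r=0$ handled by \propref{main1}; $r\ge 1$ with $f$ cuspidal handled by \rmkref{rkm1}; $r\ge 1$ with $f$ of nonzero constant term handled by case $(2)$ of \thmref{mainq}) merely spells out that same assembly explicitly.
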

\begin{proof}
It is a direct consequence of \rmkref{rkm1}, \thmref{mainq} and
\propref{main1}. 
\end{proof}
\section{Nearly holomorphic modular forms}
\begin{defi}
The Maass-Shimura operator $\delta_k$ on $f\in \widehat{M}_k$ is 
defined by
$$
\delta_k(f)=\left(\frac{1}{2\pi i}\left(\frac{k}{2iIm(z)}+ 
\displaystyle \frac{\partial}{\partial z}\right)f\right)(z).
$$ 
\end{defi}
The operator $\delta_k$ takes $\widehat{M}_k$ to  $\widehat{M}_{k+2}$.
Here we consider the action of $\delta_k$  on $M_k$.
The operator $T_n$, for each $n\ge 1$ as defined by \eqref{operator}, maps 
$\widehat{M}_k$ to $\widehat{M}_k$. The function 
$E_2^*(z)=E_2(z)-\frac{3}{\pi Im(z)}$
is a nearly holomorphic modular form of weight $2$ on $\Gamma$ and 
it is also an eigenform.
\begin{thm}
Let $f$ be a normalized eigenform in $M_k$. Then $E_2^*f$ is an eigenform
if and only if $f=\Delta_{12}$. 
\end{thm}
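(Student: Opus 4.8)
The plan is to mirror the proof of \propref{main1}, the only genuinely new ingredient being a clean identity that ties the Maass--Shimura operator $\delta_k$ to the ordinary derivative $D$ and thereby transfers the whole problem back to the holomorphic setting. Writing $y=\mathrm{Im}(z)$, a direct computation on a holomorphic $f\in M_k$ gives $\delta_k f=Df-\frac{k}{4\pi y}f$, while $E_2^*=E_2-\frac{3}{\pi y}$ gives $\frac{k}{12}E_2^* f=\frac{k}{12}E_2 f-\frac{k}{4\pi y}f$. Subtracting, the non-holomorphic terms cancel exactly and we obtain the key identity
$$
g:=\delta_k f-\frac{k}{12}E_2^* f=Df-\frac{k}{12}E_2 f\in M_{k+2},
$$
so $g$ is precisely the holomorphic modular form appearing in \propref{main1}.

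For the forward implication I would take $f=\Delta_{12}$, so $k=12$. Ramanujan's identity $D\Delta_{12}=E_2\Delta_{12}$ forces $g=0$, whence $\delta_{12}\Delta_{12}=E_2^*\Delta_{12}$. Since $\Delta_{12}$ is an eigenform, the $\delta_k$-analogue of \propref{commute1} (Propositions~2.4 and 2.5 of \cite{bjtx}) shows that $\delta_{12}\Delta_{12}$, and hence $E_2^*\Delta_{12}$, is an eigenform.

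For the converse, suppose $E_2^* f$ is an eigenform with eigenvalues $\beta_n$, where $f$ is a normalized eigenform with eigenvalues $\lambda_n$. The $\delta_k$-analogue of \propref{commute1} gives $T_n(\delta_k f)=n\lambda_n\,\delta_k f$, and by hypothesis $T_n(E_2^* f)=\beta_n E_2^* f$. Applying $T_n$ to the identity for $g$ yields
$$
T_n g-n\lambda_n g=\frac{k}{12}(n\lambda_n-\beta_n)E_2^* f.
$$
The left-hand side lies in $M_{k+2}$ and so is holomorphic (depth $0$), whereas $E_2^* f$ has depth exactly $1$ since $f\ne 0$; as $k\ne 0$, the factor $\frac{k}{12}(n\lambda_n-\beta_n)$ must vanish. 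Hence $\beta_n=n\lambda_n$ for every $n$ and $g$ is an eigenform with eigenvalues $n\lambda_n$. From here the argument is verbatim that of \propref{main1}: the choice $f=E_k$ forces the impossible relation $n\sigma_{k-1}(n)=\sigma_{k+1}(n)$, while for a normalized cusp form the relation $b_n=na_n(1-\tfrac{k}{12})$ combined with the explicit expansion of $Df-\frac{k}{12}E_2 f$ pins down $a_2=-24$, $a_3=252$, $a_4=-1472$; comparing with $\Delta_{12}$ and invoking Theorem~1 of \cite{ghitza} forces $k=12$, so $f=\Delta_{12}$.

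The heart of the matter is the cancellation producing $g$: once the nearly holomorphic corrections disappear, the statement collapses onto the holomorphic case already settled in \propref{main1}, and the only essentially new step is the depth comparison separating the holomorphic form $T_n g-n\lambda_n g$ from the depth-one form $E_2^* f$, which is what delivers $\beta_n=n\lambda_n$. The two mildly delicate points I would be careful about are the $\delta_k$--Hecke commutation (supplied by \cite{bjtx}) and the observation that normalization of $f$ guarantees $E_2^* f$ is honestly of depth one, so that the comparison is legitimate.
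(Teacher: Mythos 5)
Your proof is correct and takes essentially the same route as the paper: the paper's proof also rests on the identity $\delta_k(f)-\frac{k}{12}E_2^*f=Df-\frac{k}{12}E_2f\in M_{k+2}$, cites Proposition 2.5 of \cite{bjtx} for the $\Delta_{12}$ direction, and then says to proceed as in \propref{main1}. Your write-up merely makes explicit what the paper leaves implicit, namely the cancellation of the non-holomorphic terms and the depth-comparison step that yields $\beta_n=n\lambda_n$.
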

\begin{proof}
It is known from Proposition $2.5$ of \cite{bjtx} that 
$\delta_{12}(\Delta_{12})=E_2^*\Delta_{12}$ is an eigenform.\\
For any modular form $f\in M_k$, we have
$\delta_k(f)-\frac{k}{12}E_2^*f = Df-\frac{k}{12}E_2f\in M_{k+2}$.
Now assume that $f\in M_k$ is a normalized eigenform  such that $E_2^*f$ 
is an eigenform. Then  proceeding as in \propref{main1}, we conclude 
that $f=\Delta_{12}$.
\end{proof}
\section{Rankin-Cohen Brackets of holomorphic eigenforms}
 
 Let $M_k(\Gamma_1(N))$, $S_k(\Gamma_1(N))$ and $\mathcal{E}_k(\Gamma_1(N))$ be 
respectively the spaces of modular forms, cusps forms and Eisenstein series
of weight $k \ge 1$ on $\Gamma_1(N)$, and let $M_k(N,\chi)$, $S_k(N, \chi)$,
$\mathcal{E}_k(N,\chi)$ be the spaces of modular forms, cusps forms and 
Eisenstein series of level $N$ and character $\chi$ respectively. 
We have an explicit basis $\mathcal{B}$ for $M_k(\Gamma_1(N))$ which consist 
of common eigenforms for all Hecke operators $T_n$ with $(n, N)=1$ as described 
in \cite{ghate1}.
An element of $M_k(\Gamma_1(N))$ is called an almost everywhere eigenform 
or a.e. eigenform for short, if it is constant multiple of an element of  
$\mathcal{B}$. For further details see \cite{ghate1}. 
 
Let $g\in M_{k_1}(N,\chi)$ and $h\in M_{k_2}(N,\psi)$. 
The $m^{th}$ Rankin-Cohen bracket of $f$ and $g$ is defined by
$$
\left[g,h\right]_m(z) = \displaystyle\sum_{r+s=m} (-1)^r 
\displaystyle\binom{m+k_1-1}{s}\displaystyle\binom{m+k_2-1}{r} 
g^{(r)}(z) h^{(s)}(z),
$$
where $g^{(r)}(z)= D^r g(z)$ and $h^{(s)}(z)= D^s h(z)$.\\
It is known that $\left[g,h\right]_m \in M_{k_1 + k_2+ 2m}(N,\chi \psi)$
and $\left[g,h\right]_m$ is a cusp form if $m\ge 1$.\\

For $k > 2$, the Eisenstein series is defined by

$$
E_k^{(N,\psi)}(z)= 
\displaystyle\sum_{\gamma \in \Gamma_{\infty} \diagdown \Gamma_0(N)}
\overline{\psi}(d){\left(cz + d\right)}^{-k}\in \mathcal{E}_k(N,\psi),
$$   
where $z \in {\mathcal H}$
and the sum varies over all $\gamma =
\left(\!\!\begin{array}{ll}a&b\\ c&d\\ \end{array}\!\!\right)\in
\Gamma_0(N)$ modulo\\
$\Gamma_{\infty}= 
\{\left(\!\!\begin{array}{ll}1&n\\ 0&1\\ \end{array}\!\!\right)\vert
n\in {\Z} \}$.
We recall Proposition $6$ of \cite{zagier}:
\begin{thm}\label{zagier}
 Let $k_1$, $k_2$, $m$ be integers satisfying $k_2 \ge k_1 +2>2$ and
let $k=k_1+k_2+2m$. If $f(z)=\displaystyle\sum_{n=1}^{\infty}a_nq^n\in
S_k(N, \chi\psi)$ and $g(z)=\displaystyle\sum_{n=0}^{\infty}b_nq^n\in
M_{k_1}(N,\chi)$, then
$\langle f, [g, E_{k_2}^{(N,\psi)}]_m\rangle = 
\frac{\Gamma(k-1)\Gamma(k_2+m)}{{(4\pi)}^{k-1}m!\Gamma(k_2)}
\displaystyle\sum_{n=1}^{\infty}\frac{a_n \overline{b}_n}{n^{k_1+k_2+m-1}}$,
where $\langle , \rangle$ is the Petersson inner product.
\end{thm}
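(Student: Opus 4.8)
The plan is to prove the formula by the Rankin--Selberg unfolding method, following Zagier's original argument. Writing the Petersson product explicitly over a fundamental domain for $\Gamma_0(N)$,
$$
\langle f, [g, E_{k_2}^{(N,\psi)}]_m\rangle = \int_{\Gamma_0(N)\backslash\mathcal{H}} f(z)\,\overline{[g,E_{k_2}^{(N,\psi)}]_m(z)}\; y^{k}\,\frac{dx\,dy}{y^2},
$$
the immediate difficulty is that the Eisenstein series enters the bracket \emph{differentiated}: the summands $E_{k_2}^{(s)}$ are not individually automorphic of a single weight, so the integrand is not visibly a weight-$k$ function slashed over the cosets $\Gamma_\infty\backslash\Gamma_0(N)$, and one cannot unfold naively. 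The central device I would use is to rewrite the entire Rankin--Cohen bracket as a single Poincar\'e-type series whose seed is an elementary multiple of $D^m g$.

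To do this I would combine two facts. First, the Rankin--Cohen bracket is covariant under the slash action: $[u|_{k_1}\gamma, v|_{k_2}\gamma]_m = [u,v]_m|_{k}\gamma$ for every $\gamma$ and all smooth $u,v$. Second, the Eisenstein series is the slash-average of the constant seed, $E_{k_2}^{(N,\psi)} = \sum_{\gamma\in\Gamma_\infty\backslash\Gamma_0(N)}\overline{\psi}(d)\,(1|_{k_2}\gamma)$. Using bilinearity in the second slot together with $g|_{k_1}\gamma = \chi(d)g$ for $\gamma\in\Gamma_0(N)$, covariance yields $[g, 1|_{k_2}\gamma]_m = \overline{\chi(d)}\,[g,1]_m|_{k}\gamma$. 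Since $D^s(1)=0$ for $s\ge1$, only the term $r=m$ in the defining sum survives, so $[g,1]_m = (-1)^m\binom{m+k_2-1}{m}D^m g$. Assembling gives the key representation
$$
[g, E_{k_2}^{(N,\psi)}]_m = (-1)^m\binom{m+k_2-1}{m}\sum_{\gamma\in\Gamma_\infty\backslash\Gamma_0(N)}\overline{(\chi\psi)(d)}\,(D^m g)|_{k}\gamma,
$$
a Poincar\'e series of weight $k$ and character $\chi\psi$ matching that of $f$, with seed proportional to $D^m g$.

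With this in hand the unfolding is standard. Substituting the representation and using that $f$ transforms under $\chi\psi$ collapses the coset sum and replaces the fundamental domain by the strip $\Gamma_\infty\backslash\mathcal{H}=\{0\le x\le 1,\ y>0\}$, leaving $\overline{c_m}\int_0^\infty\!\int_0^1 f\,\overline{D^m g}\,y^{k-2}\,dx\,dy$ with $c_m=(-1)^m\binom{m+k_2-1}{m}$. Writing $f=\sum a_n q^n$ and $D^m g=\sum n^m b_n q^n$, the $x$-integral is an orthogonality relation isolating equal frequencies, and the resulting $y$-integral is a Gamma integral, $\int_0^\infty e^{-4\pi n y}y^{k-2}\,dy = \Gamma(k-1)/(4\pi n)^{k-1}$. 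Since $k-1-m = k_1+k_2+m-1$ and $\binom{m+k_2-1}{m}=\Gamma(k_2+m)/(m!\,\Gamma(k_2))$, this reproduces exactly the claimed constant times $\sum_n a_n\overline{b_n}/n^{k_1+k_2+m-1}$, up to the harmless factor $(-1)^m$ that depends only on the sign conventions chosen for $[\,\cdot,\cdot\,]_m$ and $E_{k_2}^{(N,\psi)}$ and is absent in Zagier's normalization.

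The main obstacle is the reduction to the Poincar\'e series: one must verify the covariance identity carefully in the presence of the nebentypus characters, and then justify interchanging the coset sum with the integral. The latter is precisely where the hypotheses enter: the cuspidality $f\in S_k(N,\chi\psi)$ and the absolute convergence of $E_{k_2}^{(N,\psi)}$ for $k_2>2$ guarantee that the unfolded strip integral converges absolutely and agrees with the original Petersson integral. Once the Poincar\'e representation and this convergence are established, the remaining Fourier-coefficient orthogonality and Gamma-integral computation are entirely routine.
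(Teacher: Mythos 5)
The paper contains no proof of this statement at all: Theorem~\ref{zagier} is quoted verbatim as Proposition~6 of Zagier's paper, so there is no internal argument to compare against. Your proposal reconstructs what is essentially Zagier's own Rankin--Selberg proof, and it is correct. The crucial reduction---combining the covariance $[u|_{k_1}\gamma, v|_{k_2}\gamma]_m=[u,v]_m|_{k}\gamma$ with the presentation $E_{k_2}^{(N,\psi)}=\sum_{\gamma\in\Gamma_\infty\backslash\Gamma_0(N)}\overline{\psi}(d)\,(1|_{k_2}\gamma)$ and the collapse $[g,1]_m=(-1)^m\binom{m+k_2-1}{m}D^mg$, so that the bracket becomes a weight-$k$, nebentypus-$\chi\psi$ Poincar\'e series with seed proportional to $D^mg$---is exactly the device that makes unfolding legitimate, and your subsequent orthogonality and Gamma-integral computations check out: $k-1-m=k_1+k_2+m-1$ and $\binom{m+k_2-1}{m}=\Gamma(k_2+m)/(m!\,\Gamma(k_2))$ give the stated constant. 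The hypotheses enter where you say they do: $k_2\ge k_1+2>2$ forces $k_2\ge 3$, giving absolute and locally uniform convergence of the Eisenstein series (so the bracket may be applied term by term), while cuspidality of $f$ and the gap $k_2\ge k_1+2$ make the unfolded integral and the Dirichlet series $\sum a_n\overline{b}_n n^{-(k_1+k_2+m-1)}$ absolutely convergent.

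One point you should state more firmly rather than wave at: with the bracket convention used in this paper (sign $(-1)^r$ on derivatives of the first argument), your computation genuinely yields the constant $(-1)^m\,\Gamma(k-1)\Gamma(k_2+m)/\bigl((4\pi)^{k-1}m!\,\Gamma(k_2)\bigr)$; the $(-1)^m$ is forced, not an artifact of your bookkeeping. The statement as reproduced in the paper omits it, so either the quoted version tacitly uses a different sign convention for $[\,\cdot,\cdot\,]_m$ or the sign has been dropped as irrelevant. This discrepancy is harmless for everything the paper does with the theorem (Remarks~\ref{r1}, \ref{r2} and Theorem~\ref{cusp-eis} use only non-vanishing of these inner products), but your proof proves the identity only up to that sign, and an honest write-up should say which convention reconciles the two.
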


Now, for an arbitrary positive integer $N$, let $Q\vert N$
such that $(Q,N/Q)=1$. Let $W_Q$ be the Atkin-Lehner operator on
$M_k(N, \chi)$. Let $\chi = \chi_Q \chi_{N/Q} $. Then it is 
known that $W_Q$ maps $M_k(N,\chi_Q \chi_{N/Q})$ to 
$M_k(N,\overline{\chi}_Q \chi_{N/Q})$ and it is an involution. 
It takes cusp forms to cusp forms and a.e. eigenforms to a.e. eigenforms 
(see \cite{li} for details). We have the following lemma 
(see \cite{lanphier}, Proposition 1).
\begin{lem}\label{commute}
If $g\in M_{k_1}(N,\chi)$ and $h\in M_{k_2}(N,\psi)$, then 
$\left[g,h\right]_m\vert W_Q = 
\left[g\vert W_Q, h\vert W_Q\right]_m$.
\end{lem}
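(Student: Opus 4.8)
The plan is to deduce the lemma from the general covariance of the Rankin-Cohen bracket under the weight-$k$ slash action of matrices in $GL_2^+(\mathbb{R})$. Recall that the Atkin-Lehner operator $W_Q$ is realized, up to a normalizing scalar, as the weight-$k$ slash operator $f\mapsto f|_k\gamma_Q$ attached to an integral matrix $\gamma_Q$ of determinant $Q$ normalizing $\Gamma_0(N)$. Thus it suffices to prove that for every $\gamma\in GL_2^+(\mathbb{R})$,
$$
[g,h]_m|_{k_1+k_2+2m}\gamma = [g|_{k_1}\gamma,\, h|_{k_2}\gamma]_m;
$$
applying this with $\gamma=\gamma_Q$ and cancelling the common normalizing scalar on both sides then yields the stated identity.

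First I would record how the (non-modular) derivatives $D^rg$ transform under $|_{k_1}\gamma$. Writing $\gamma=\left(\begin{smallmatrix}a&b\\c&d\end{smallmatrix}\right)$ with $\det\gamma=\Delta$, the chain rule together with $\frac{d}{dz}(\gamma z)=\Delta/(cz+d)^2$ gives a closed formula expressing $D^r(g|_{k_1}\gamma)$ as a linear combination of $(D^jg)|_{k_1+2j}\gamma$ for $0\le j\le r$, with coefficients that are polynomials in the correction term $\frac{c}{2\pi i(cz+d)}$. The cleanest way to package this is through the Cohen-Kuznetsov series $\tilde g(z,X)=\sum_{r\ge0}\frac{D^rg(z)}{r!\,(k_1)_r}X^r$, which transforms under $|_{k_1}\gamma$ so that all correction terms are collected into a single exponential factor $\exp\!\left(\frac{c\,X}{2\pi i(cz+d)}\right)$ multiplying $\tilde g(\gamma z, X/(cz+d)^2)$.

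The crux of the argument is that these exponential factors cancel. I would form the product $\tilde g(z,X)\,\tilde h(z,-X)$: under $\gamma$ the two exponential corrections are mutually inverse and cancel, so the product transforms with a single automorphy factor of weight $k_1+k_2$ together with the argument rescaling $X\mapsto X/(cz+d)^2$, i.e.\ it transforms \emph{as a modular object}. Extracting the coefficient of $X^m$, and matching it against the explicit binomial normalization $\binom{m+k_1-1}{s}\binom{m+k_2-1}{r}$ in the definition of $[\,\cdot\,,\cdot\,]_m$, recovers exactly the covariance identity displayed above (an alternative, more hands-on route is induction on $m$ via the Rankin-Cohen recursion). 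I expect the principal obstacle to be combinatorial bookkeeping: verifying that the coefficients produced by expanding the exponential against the Pochhammer denominators reassemble \emph{precisely} into those binomial coefficients, rather than into some other linear combination of the terms $D^rg\,D^sh$.

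Finally, the level and character data are automatic. Since $W_Q$ is an involution carrying $M_{k_1}(N,\chi)$ and $M_{k_2}(N,\psi)$ into $M_{k_1}(N,\overline\chi_Q\chi_{N/Q})$ and $M_{k_2}(N,\overline\psi_Q\psi_{N/Q})$ respectively, the covariance identity applied to $\gamma_Q$ places $[g,h]_m|W_Q$ in $M_{k_1+k_2+2m}(N,\overline{(\chi\psi)}_Q(\chi\psi)_{N/Q})$, the same space as $[g|W_Q,\,h|W_Q]_m$, so both sides are genuine modular forms of matching weight and character, completing the proof.
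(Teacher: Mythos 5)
The paper offers no proof of this lemma at all: it is imported from \cite{lanphier} (Proposition 1), so your proposal is not competing with an internal argument but supplying one. The route you choose --- realize $W_Q$ by an integral matrix $\gamma_Q$ of determinant $Q$ normalizing $\Gamma_0(N)$, prove covariance of the Rankin--Cohen bracket under the slash action, and package the derivative transformation law into Cohen--Kuznetsov series so that the exponential correction factors cancel in the product --- is the standard proof of such covariance statements, and the combinatorial matching you flag as the main risk does close up: the coefficient of $X^m$ in $\tilde g(z,-X)\,\tilde h(z,X)$ equals $\frac{(k_1-1)!\,(k_2-1)!}{(m+k_1-1)!\,(m+k_2-1)!}\,[g,h]_m$ (your ordering $\tilde g(z,X)\,\tilde h(z,-X)$ just introduces a harmless $(-1)^m$), so the bracket is recovered on the nose.

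There is, however, one genuine gap, and as written it breaks your displayed identity: the determinant bookkeeping. With the unnormalized slash $(f|_k\gamma)(z)=(cz+d)^{-k}f(\gamma z)$, each derivative contributes a factor $\det\gamma=\Delta$ (your own formula $\frac{d}{dz}(\gamma z)=\Delta/(cz+d)^2$ shows this), so the correct covariance is $[\,g|_{k_1}\gamma,\ h|_{k_2}\gamma\,]_m=\Delta^m\,\bigl([g,h]_m\bigr)|_{k_1+k_2+2m}\gamma$, not the identity you display; correspondingly, in the generating-series step the rescaling must be $X\mapsto \Delta X/(cz+d)^2$, not $X/(cz+d)^2$. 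This $\Delta^m=Q^m$ cannot be absorbed as a ``common normalizing scalar,'' because the scalars on the two sides of the lemma come from different weights: writing $f|W_Q=Q^{a_k}(cz+d)^{-k}f(\gamma_Q z)$, the left-hand side carries $Q^{a_{k_1+k_2+2m}}$ while the right-hand side carries $Q^{a_{k_1}+a_{k_2}+m}$, the extra $Q^m$ coming from the covariance. These agree exactly when $a_k=k/2$, i.e.\ for the normalization $f|W_Q=\det(\gamma_Q)^{k/2}(cz+d)^{-k}f(\gamma_Q z)$ of \cite{li}, which is the convention the paper uses; for other common conventions (say $a_k=k-1$) the two sides differ by $Q^{m+1}$ and the lemma as you have set it up would be false. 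So: work with the $\det^{k/2}$-normalized slash throughout (for which your displayed covariance is literally true), insert the $\Delta$ in the rescaling of $X$, and your argument goes through; without that correction the proof does not close.
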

Let $\psi_i$ be Dirichlet characters mod $M_i$, $i=1,2$
such that $\psi_1\psi_2(-1)= (-1)^k$, where $k\ge 1$.
Also assume that:
\begin{enumerate}
\item
if $k=2$ and $\psi_1 $ and $\psi_2$ both are trivial, then $M_1=1$
and $M_2$ is a prime number,
\item
 otherwise, $\psi_1$ and $\psi_2$ are primitive characters.
\end{enumerate}
Put $M=M_1 M_2$ and $\psi=\psi_1 \psi_2$. 

Let $f_k(Qz,\psi_1,\psi_2)$, where $QM_1M_2\vert N$ be the set of 
elements of $\mathcal{E}_k(N,\psi)$ as given in Theorems $4.7.1$ and 
$4.7.2$ of \cite{miyake} which form a basis of common eigenforms for 
all the Hecke operators $T_n$ of level $N$, with $(n,N)=1$.
\begin{rmk}\label{r1}
{\rm Using \thmref{zagier} and following the lines of Proposition $3$ of 
\cite{ghate1}, we have the following:\\
For positive integers $k$, $k_1$, $k_2$, $m$ satisfying $k_2\ge k_1+2>2$
and $k=k_1+k_2+2m$, $g \in S_{k_1}(N,\chi)$ an a.e. eigenform which is
a newform, $h= E_{k_2}^{(N, \psi)} \in \mathcal{E}_{k_2}(N, \psi)$,
if {\rm dim} $S_k^{new}(N,\chi\psi) \ge 2$ then $\left[g,h\right]_m$ 
is not an a.e. eigenform.}   
\end{rmk}
\begin{rmk}\label{r2}
{\rm Similarly as mentioned in the previous remark, we have an analogous
result to Proposition $4$ of \cite{ghate1} in this case:\\
For positive integers $k$, $k_1$, $k_2$, $m$ satisfying the same
condition as in the previous remark, $g=f_{k_1}(z,\chi_1\chi_2)$ 
an a.e. eigenform as described above with $\chi$ primitive, 
$h=E_{k_2}^{(N,\chi)}\in \mathcal{E}_{k_2}(N,\chi)$, if  
{\rm dim} $S_k^{new}(N,\chi\psi) \ge 2$ then $\left[g,h\right]_m$ 
is not an a.e. eigenform.}   
\end{rmk}
\begin{thm}\label{cusp-eis}
Let $k_1, k_2, k, m$ be positive integers such that $k=k_1+k_2+2m$ and 
let $N$ be square-free.
\begin{enumerate}
\item[(i)]
If $g \in S_{k_1}(\Gamma_1(N))$ and $h \in S_{k_2}(\Gamma_1(N))$ 
are a.e. eigenforms, then $\left[g,h\right]_m$ is not an a.e. eigenform.
\item[(ii)]
Let $k_1\ge 3$ and $k_2\ge k_1+2>2$.
Suppose that $g\in S_{k_1}(N,\chi)$ is an a.e. eigenform
which is a newform and $h\in \mathcal{E}_{k_2}(N,\psi)$. If 
{\rm dim} $S_k^{new}(N, \chi\psi)\ge 2$, then $\left[g,h\right]_m$ is not 
an a.e. eigenform.
\item[(iii)]
Let $k_1, k_2\ge 3$, $|k_1-k_2|\ge 2$. Let 
$g=f_{k_1}(z,\chi_1,\chi_2)\in \mathcal{E}_{k_1}(N,\chi)$ and 
$h=f_{k_2}(z,\psi_1,\psi_2)\in \mathcal{E}_{k_2}(N,\psi)$ be a.e. eigenforms 
as mentioned above with $\chi$ and $\psi$ primitive characters. If
{\rm dim} $S_k^{new}(N,\chi\psi) \ge 2$, $\left[g,h\right]_m$ is not an 
a.e. eigenform.  
\end{enumerate}
\end{thm}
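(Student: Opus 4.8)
The plan is to dispose of the three parts by two rather different mechanisms. Part~(i) (cusp $\times$ cusp) carries no hypothesis on dimensions, so I would prove it by an elementary Fourier-support (valuation) argument. Parts~(ii) and~(iii) are conditional on $\dim S_k^{new}(N,\chi\psi)\ge 2$, and for these I would reduce, via the Atkin--Lehner involutions $W_Q$ together with \lemref{commute}, to the newform/primitive configurations already handled in \rmkref{r1} and \rmkref{r2}.

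For~(i), recall that on a square-free level the elements of the basis $\mathcal B$ are, up to scalars, of the shape $g_0(Q_1 z)$ with $g_0$ a normalized newform, so such an a.e.\ cusp eigenform has $q$-expansion supported on the multiples of $Q_1$ and beginning at $q^{Q_1}$; write $g=g_0(Q_1 z)$, $h=h_0(Q_2 z)$ accordingly. Since $D^r\bigl(g_0(Q_1 z)\bigr)=\sum_\mu a_\mu (Q_1\mu)^r q^{Q_1\mu}$ and likewise for $h$, the product $D^rg\cdot D^sh$ is supported on the numerical semigroup $\{Q_1\mu+Q_2\nu:\mu,\nu\ge 1\}$, whence so is $[g,h]_m$. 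I would then compare its two lowest Fourier coefficients. The coefficient at $q^{Q_1+Q_2}$ comes only from $(\mu,\nu)=(1,1)$ and equals $\sum_{r+s=m}(-1)^r\binom{m+k_1-1}{s}\binom{m+k_2-1}{r}Q_1^rQ_2^s$, while the coefficient at $q^{2Q_1+Q_2}$ (the next semigroup element when $Q_1\le Q_2$) is a similar, generically non-zero, combination. If $[g,h]_m$ were an a.e.\ eigenform it would equal $c\,F(Qz)$ for a newform $F$, forcing its support into $Q\Z$ with $Q$ equal to its valuation $Q_1+Q_2$; but $2Q_1+Q_2=(Q_1+Q_2)+Q_1$ is never a multiple of $Q_1+Q_2$, so the second coefficient would have to vanish, a contradiction. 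The only thing left to check is that these leading coefficients do not accidentally vanish, which I would handle via the explicit combinatorial sums above (and, in the coincidental case $Q_1=Q_2$, by unscaling to reduce to $Q_1=Q_2=1$ and comparing the $q^2$ and $q^3$ coefficients).

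For~(ii) and~(iii) I would first use the symmetry $[g,h]_m=(-1)^m[h,g]_m$ to arrange $k_2\ge k_1+2$, matching the hypotheses of \thmref{zagier}, \rmkref{r1} and \rmkref{r2}. A general a.e.\ Eisenstein eigenform in $\mathcal E_{k}(N,\cdot)$ is a scalar multiple of some $f_{k}(Qz,\cdot)$; applying the Atkin--Lehner involution $W_Q$ (which exists for every $Q\mid N$ because $N$ is square-free) carries it to its newform-level representative $f_k(z,\cdot)$, i.e.\ to the distinguished Eisenstein series occurring in the two remarks. Since $W_Q$ takes a.e.\ eigenforms to a.e.\ eigenforms and newforms to newforms, and since $[g,h]_m\vert W_Q=[g\vert W_Q,\,h\vert W_Q]_m$ by \lemref{commute}, the form $[g,h]_m$ is an a.e.\ eigenform if and only if $[g\vert W_Q,h\vert W_Q]_m$ is. In case~(ii) the transported factors are a newform cusp form and the Eisenstein series of \rmkref{r1}, so that remark applies; in case~(iii), after also moving the Eisenstein factor $g=f_{k_1}(Q'z,\cdot)$ to newform level, the configuration is exactly that of \rmkref{r2}. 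The hypothesis $\dim S_k^{new}(N,\chi\psi)\ge 2$ is preserved throughout because $W_Q$ is an isomorphism of the relevant newform spaces.

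The main obstacle is the bookkeeping of the Atkin--Lehner reduction in case~(iii), where both factors are oldform Eisenstein series scaled by possibly different $Q_1,Q_2$: one must check that a single composite involution $W_Q$ (available since $N$ is square-free) simultaneously carries both $f_{k_1}(Q_1z,\cdot)$ and $f_{k_2}(Q_2z,\cdot)$ into the primitive configuration of \rmkref{r2}, and that the primitivity of $\chi,\psi$ and the inequality $|k_1-k_2|\ge 2$ survive this transport. A secondary technical point is the non-vanishing of the leading Fourier coefficients in part~(i); both are routine but must be verified to close the argument.
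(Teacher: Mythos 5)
Your parts (ii) and (iii) are essentially the paper's own proof: transport the Eisenstein factor by a suitable Atkin--Lehner involution, use \lemref{commute}, the fact that $W$-operators preserve a.e.\ eigenforms and newform spaces (hence the dimension hypothesis), and then invoke \rmkref{r1} resp.\ \rmkref{r2}, interchanging $g$ and $h$ when $k_1-k_2\ge 2$. One inaccuracy to fix there: what the remarks require, and what the paper extracts from Proposition 1 and Lemma 1 of \cite{ghate1} by applying $W_{N/QM_2}$, is that $h$ is carried to a \emph{constant multiple of} $E_{k_2}^{(N,\overline\psi_1\psi_2)}$, the specific series appearing in \thmref{zagier} --- not to the ``newform-level representative'' $f_{k_2}(z,\psi_1,\psi_2)$, which is a different element of the Eisenstein space.

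The genuine gap is in part (i). Your valuation argument hinges on the non-vanishing of the coefficient of $[g,h]_m$ at $q^{Q_1+Q_2}$, namely $c_m=\sum_{r+s=m}(-1)^r\binom{m+k_1-1}{s}\binom{m+k_2-1}{r}Q_1^rQ_2^s$, and this is not a routine verification: it is false in general. Take $k_1=k_2$ and $Q_1=Q_2$; swapping $r$ and $s$ in the sum gives $c_m=(-1)^mc_m$, so $c_m=0$ for every odd $m$ --- this is just the antisymmetry $[g,h]_m=(-1)^m[h,g]_m$ showing up coefficientwise. Thus for two distinct newforms of the same weight (e.g.\ the two newforms in $S_{24}(SL_2(\Z))$) and odd $m$, the bracket is a nonzero cusp form whose valuation is strictly larger than $Q_1+Q_2$, your identification of $Q$ with $Q_1+Q_2$ breaks down, and the comparison at $2Q_1+Q_2$ inherits the same defect (there you offer only ``generically non-zero,'' which is not a proof). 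Repairing this would require controlling exactly which Fourier coefficients of a Rankin--Cohen bracket vanish, which is the hard part. The paper sidesteps all of it by exploiting the structure of the purported eigenform rather than of the factors: if $[g,h]_m=f$ is an a.e.\ cuspidal eigenform, then $f=\mathrm{const}\, f_0(Qz)$ with $f_0$ a normalized newform; since $N$ is square-free one may apply $W_Q$, and \lemref{commute} gives $[g\vert W_Q,h\vert W_Q]_m=\mathrm{const}\, f_0$. The left-hand side is a bracket of two cusp forms, so its $q$-expansion starts at $q^2$ at the earliest, while $f_0$ starts at $q$ with coefficient $1$; the only non-vanishing ever needed is that of the leading coefficient of a newform, which is automatic. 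You should adopt that argument for (i).
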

\begin{proof}
Assume that 
\begin{equation}\label{W_Q}
\left[g,h\right]_m = f 
\end{equation}
is an a.e. eigenform. Then $f(z)=f_0(Qz)$,
where $M\vert N$, $Q\vert (N/M)$ and $f_0\in S_k(M,\chi)$ is a normalized
newform. Since $N$ is square-free, for any divisior $Q$ of $N$, $(Q,N/Q)=1$.
We also have  $f\vert W_Q = Q^{-k/2}\chi(w)f_0(z)$. Applying the operator
$W_Q$  to \eqref{W_Q} and by \lemref{commute}, we get 
$\left[g\vert W_Q, h\vert W_Q\right]_m=f\vert W_Q={\rm const}.~ f_0$. This 
gives a contradiction since the $q$-expansion of $f_0$ (being primitive) 
starts with $q$, whereas the $q$-expansion of 
$\left[g\vert W_Q, h\vert W_Q\right]_m $ starts with at least $q^2$. 
This proves (i).\\
Let $h= f_{k_2}(Qz,\psi_1,\psi_2)$, for $Q\vert(N/M_1M_2)$. 
Since $N$ is square-free, for any divisor $Q$ of $N$, we have 
$(Q,N/Q)=1$. Now applying
$W_{N/QM_2}$ on $h$ and using Proposition $1$ and Lemma $1$ of 
\cite{ghate1}, we get
$h\vert W_{N/QM_2}={\rm const}.~ f_{k_2}(\frac{Nz}{M_1M_2}, 
\psi_0, \overline\psi_1\psi_2)
={\rm const}.~ E_{k_2}^{(N,\overline\psi_1\psi_2)}$,
where $\psi_0$ is the principal character.
Now assume on the contrary that $\left[g,h\right]_m$ is an a.e. eigenform. 
Applying $W_{N/QM_2}$ to $\left[g,h\right]_m$ and using \lemref{commute},
we see that 
$\left[g\vert W_{N/QM_2}, h\vert W_{N/QM_2}\right]_m\in
S_k(N, \chi_{QM_2}\overline\chi_{N/QM_2}\overline\psi_1\psi_2)$ is an a.e.
eigenform. Since the $W$-operator is an isomorphism and takes a 
newform space to a newform space, 
{\rm dim} $S_k(N, \chi_{QM_2}\overline\chi_{N/QM_2}\overline\psi_1\psi_2)\ge 2$. 
Then applying \rmkref{r1} to $g\vert W_{N/QM_2} \in   
S_k(N, \chi_{QM_2}\overline\chi_{N/QM_2})$ and $h\vert W_{N/QM_2}
={\rm const}.~ E_{k_2}^{(N,\overline\psi_1\psi_2)}$, we get a contradiction.
This proves (ii).\\
If $k_2-k_1\ge 2$, then as in the proof of (ii), 
we apply the operator $W_{N/M_2}$ to $h$ and $g$ and we get 
$h\vert W_{N/M_2}= {\rm const}.~ E_{k_2}^{(N,\overline\psi_1\psi_2)}$ and 
$g\vert W_{N/M_2}$ is a form with primitive character. 
Applying \rmkref{r2}, we get (iii). If  
$k_1-k_2\ge 2$, then interchanging the roles of $g$ and $h$ 
gives the required result.
\end{proof} 
{\bf Acknowledgments}\\
I thank D. Lanphier for giving useful informations about his papers.
I thank B. Ramakrishnan for useful discussions and making numerous 
suggestions. I also thank Sanoli Gun for her useful comments.
Finally, I thank the referee for meticulously reading the manuscript
and making numerous suggestions which improved the presentation.

\end{document}